\documentclass[preprint,11pt]{article}
\usepackage{amssymb,bm}
\usepackage{amsfonts}
\usepackage{mathrsfs}
\usepackage{amssymb,amsfonts,amsmath,mathtools}
\usepackage{graphicx,graphics,epstopdf}
\usepackage{url,bigints}
\usepackage{cleveref}
\graphicspath{{./figures/}}
\usepackage{latexsym,float,tikz,algorithmic}
\usepackage{placeins}
\usepackage[ruled,linesnumbered]{algorithm2e}
\RestyleAlgo{ruled}
\SetKwComment{Comment}{/* }{ */}
\let\oldnl\nl
\newcommand{\nonl}{\renewcommand{\nl}{\let\nl\oldnl}}
\usepackage{adjustbox,accents}
\usepackage{esint}
\usepackage{multirow}
\usepackage{ctable}

\usepackage{subfigure}
\usepackage{times}
\usepackage{multicol,enumerate}
\usepackage{epstopdf}
\usepackage{color}
\usepackage{fullpage}
\usepackage{setspace}
\usepackage[english]{babel}
\usepackage[version=3]{mhchem}
\usepackage{amsmath, amssymb, amsthm}
\usepackage{verbatim, latexsym}
\usepackage{colortbl}
\usepackage{multirow}
\usepackage[OT2,OT1]{fontenc}
\newcommand\cyr
{
\renewcommand\rmdefault{wncyr}
\renewcommand\sfdefault{wncyss}
\renewcommand\encodingdefault{OT2}
\normalfont
\selectfont
}
\DeclareTextFontCommand{\textcyr}{\cyr}

\def\XXint#1#2#3{{\setbox0=\hbox{$#1{#2#3}{\int}$ }
\vcenter{\hbox{$#2#3$ }}\kern-.6\wd0}}

\newtheorem{theorem}{Theorem}[section]

\newtheorem{assumption}{Assumption}[section]

\newtheorem{lemma}{Lemma}[section]

\numberwithin{equation}{section}
\usepackage{soul}
\setstcolor{red}

\title{Edge Multiscale Finite Element Methods}
\author{Shubin Fu\thanks{Eastern Institute for Advanced Study, Eastern Institute of Technology, Ningbo, Zhejiang 315200, P. R. China. (\texttt{sfu@eitech.edu.cn})}\and Guanglian Li\thanks{Corresponding author. Department of Mathematics, The University of Hong Kong, Pokfulam Road, Hong Kong SAR, P.R. China. ({\tt{lotusli@maths.hku.hk}})}}
\begin{document}
\maketitle
\begin{abstract}
The objective of this paper is to review recent developments in Edge Multiscale Finite Element Methods (EMsFEM) for partial differential equations with heterogeneous coefficients or highly oscillatory solutions. Using elliptic equations with heterogeneous coefficients as an illustrative example, we present the key ideas of the method. We also analyze the approach while accounting for the discrete error in the multiscale basis functions. Extensive numerical tests are provided to validate the performance of the method.
\end{abstract}
\section{Introduction}
Partial differential equations (PDEs) with multiscale coefficients arise ubiquitously in applications. For instance, elliptic equations with multiscale permeability coefficients are fundamental to reservoir simulation models. The presence of multiple scales renders standard numerical methods---which typically rely on local polynomial basis functions---computationally infeasible or prohibitively expensive. These challenges have motivated the intensive development of homogenization theory and multiscale numerical methods over the past several decades; see, e.g., \cite{MR2916381,MR4298217,MR4891112,MR4754301,MR4628018,MR2477579,MR4520724,MR4700411,MR4490299,MR4372648,MR4191211,MR3971243,MR3177856}.

This work reviews recent progress in edge multiscale finite element methods (EMsFEMs). First introduced in \cite{MR3939320,MR3980476}, EMsFEMs have been successfully applied to a variety of PDEs with heterogeneous coefficients \cite{MR4265650,MR4343247,MR4860976,WLL2026}. The core idea is to decompose the solution locally over overlapping subdomains and then combine these local representations into a global decomposition via partition of unity functions \cite{melenk1996partition}. Consequently, the global error estimate reduces to estimating the local error within each subdomain, making local error analysis a central component of the theory. Our proof technique is inspired by the transposition method, which provides a priori estimates in weighted $L^2$-norms for homogeneous elliptic equations with nonhomogeneous Dirichlet data \cite{MR0350177}.

The main contributions of this paper are threefold. First, we present a discrete-level formulation of EMsFEMs for general PDEs with heterogeneous coefficients. Second, we provide a concise, structured error analysis. Third, we conduct several numerical tests to validate the performance of EMsFEMs, with particular emphasis on the influence of the overlap size.

The paper is organized as follows. Section~\ref{sec:model} introduces the model problem---an elliptic PDE with multiple scales---and establishes the necessary mathematical framework. In Section~\ref{sec:method-error}, we present the edge multiscale finite element method (EMsFEM) and derive the main error estimates. Numerical experiments that validate the theoretical results and illustrate the capabilities of the method are provided in Section~\ref{sec:num}. We conclude in Section~\ref{sec:conclusion} with a summary of contributions and a discussion of potential directions for future work.

Throughout, we use standard notation for Lebesgue spaces \(L^\infty(G)\) and Sobolev spaces \(W^{m,p}(G)\) on an open bounded domain \(G \subset \mathbb{R}^d\). Norms of \(H^m(G)\) (for \(p=2\)) is denoted by \(\|\cdot\|_{H^m(G)}\). The \(L^2\) scalar product on \(G\) is \((\cdot,\cdot)_G\). The notation $a\lesssim b$ means that there exists a constant $C>0$, independent of the parameters of interest ($h,\bar{d},\delta$), such that $a\leq Cb$. 
\section{Model problem}\label{sec:model}
Let $\Omega \subset \mathbb{R}^d$ be a bounded Lipschitz domain with boundary $\Gamma \coloneqq \partial\Omega$. We consider the boundary value problem
\begin{equation}\label{eq:model}
\left\{
\begin{aligned}
\mathcal{A}u &= f &&\text{in } \Omega,\\
\mathcal{B}_j u &= g_j &&\text{on } \Gamma,\; j=0,\dots,m-1.
\end{aligned}
\right.
\end{equation}

For simplicity, we focus on the case where $\mathcal{A}: H^1(\Omega) \to H^{-1}(\Omega)$ is a second-order partial differential operator of the form
\begin{equation}\label{eq:pde-operator}
\mathcal{A}u = \sum_{|\bm{p}|,|\bm{q}|\le 1}(-1)^{|\bm{p}|}
\frac{\partial^{\bm{p}}}{\partial \bm{x}^{\bm{p}}}
\Bigl( a_{\bm{p},\bm{q}}(\bm{x})\,
\frac{\partial^{\bm{q}}u}{\partial \bm{x}^{\bm{q}}}\Bigr),
\end{equation}
and the boundary operator $\mathcal{B}$ (of order $1$) is defined by
\begin{equation}\label{eq:boundary-operator}
\mathcal{B}u = \sum_{|\bm{p}|\le 1} b_{\bm{p}}(\bm{x})\,
\frac{\partial^{\bm{p}}u}{\partial \bm{x}^{\bm{p}}}.
\end{equation}
Here $a_{\bm{p},\bm{q}} \in L^{\infty}(\Omega)$ for all $|\bm{p}|,|\bm{q}|\le 1$, and the coefficients may contain multiple scales that propagate into the solution $u$. The boundary coefficients are taken as
\[
b_{\bm{p}} \coloneqq \sum_{i=1}^d a_{\bm{e}_i,\bm{p}},
\]
where $\{\bm{e}_i\}_{i=1}^d$ are the canonical basis vectors of $\mathbb{R}^d$. We assume $f \in L^2(\Omega)$ and $g \in H^{-1/2}(\Gamma)$. Such multiscale coefficients arise naturally in many applications, for instance in composite materials and porous media.

Problems of the form \eqref{eq:model} cover a wide range of situations, including uniformly elliptic operators with rough coefficients, Helmholtz operators with large wavenumber, convection-dominated diffusion equations, and indefinite Maxwell equations with large wavenumber (see, e.g., \cite{MR3980476,MR4265650,MR4343247,MR4860976,WLL2026}). Moreover, Edge Multiscale Finite Element Methods (EMsFEMs) are not restricted to stationary linear operators; they have been adapted, for example, to semilinear parabolic problems with heterogeneous coefficients \cite{poveda2024edge}.

The weak formulation of \eqref{eq:model}, with $\mathcal{A}$ and $\mathcal{B}$ given by \eqref{eq:pde-operator} and \eqref{eq:boundary-operator}, reads: find $u \in H^{1}(\Omega)$ such that
\begin{equation}\label{var_form}
a(u,v) = (f,v)_{\Omega} + \langle g, v\rangle_{\Gamma}
\qquad\forall v \in H^{1}(\Omega),
\end{equation}
where the bilinear form $a(\cdot,\cdot): H^1(\Omega)\times H^1(\Omega) \to \mathbb{R}$ is
\[
a(v,w) \coloneqq 
\sum_{|\bm{p}|,|\bm{q}|\le 1} 
\int_{\Omega} a_{\bm{p},\bm{q}}(\bm{x})\,
\frac{\partial^{\bm{q}}v}{\partial \bm{x}^{\bm{q}}}\,
\frac{\partial^{\bm{p}}w}{\partial \bm{x}^{\bm{p}}} \,\mathrm{d}\bm{x},
\qquad v,w\in H^1(\Omega).
\]
$\langle \cdot,\cdot,\rangle_{\Gamma}$ denote the duality paring between $H^{-1/2}(\Gamma)$ and $H^{1/2}(\Gamma)$. 

We assume that $a(\cdot,\cdot)$ is coercive and bounded, i.e., there exist constants $0<\alpha\le\beta<\infty$ such that
\begin{align*}
a(v,v) &\ge \alpha\, \|v\|_{H^1(\Omega)}^2
&&\forall v\in H^1(\Omega),\\
a(v,w) &\le \beta\, \|v\|_{H^1(\Omega)}\|w\|_{H^1(\Omega)}
&&\forall v,w\in H^1(\Omega).
\end{align*}
Under these conditions, the Lax--Milgram theorem guarantees the well-posedness of \eqref{var_form}.

\section{Methodology and error bound}\label{sec:method-error}
We present in this section the edge multiscale finite element method to approximate the weak solution $u\in H^1(\Omega)$ defined in \eqref{var_form}.
\subsection{Notation and preliminaries}
Let $\mathcal{T}_H := \{K_i \mid i=1,\dots,N\}$ be a quasi-uniform partition of $\Omega$ into coarse elements with mesh size $H$, and let $\mathcal{T}_h$ be a refinement of $\mathcal{T}_H$. For each coarse element $K_i \in \mathcal{T}_H$, define the overlapping subdomain
\[
\Omega_i := \bigcup\bigl\{T \in \mathcal{T}_h : \operatorname{dist}(T,K_i) \le \delta_i\bigr\},
\]
and set $\delta := \min_{K_i\in\mathcal{T}_H}\delta_i$. The collection $\{\Omega_i\}_{i=1}^N$ forms an overlapping cover of $\Omega$, i.e. $\overline{\Omega} = \bigcup_{i=1}^N\overline{\Omega}_i$. Denote $d_i := \operatorname{diam}(\Omega_i)$ and $\bar{d} := \max\{d_i : i=1,\dots,N\}$. By construction,
\[
d_i \le H + 2\delta_i \qquad (i=1,\dots,N).
\]

We make the \emph{finite-overlap assumption}: there exists a constant $\Lambda>1$, independent of $N$, such that
\begin{equation}\label{eq:finite-overlap}
\Lambda = \max_{i=1,\dots,N} \#\Lambda_i, \qquad 
\Lambda_i := \bigl\{j : \overline{\Omega}_i \cap \overline{\Omega}_j \neq \emptyset\bigr\}.
\end{equation}

Let $V_h \subset H^1(\Omega)$ be the standard $P_1$ finite element space on the fine mesh $\mathcal{T}_h$, i.e.
\[
V_h := \bigl\{ v \in H^1(\Omega) : v|_T \in \mathcal{P}_1(T) \text{ for all } T \in \mathcal{T}_h\bigr\},
\]
then the Galerkin approximation of \eqref{var_form} reads: find $u_h \in V_h$ such that
\begin{equation}\label{eq:discrete-global}
a(u_h,v_h) = (f,v_h)_{\Omega} + \langle g, v_h\rangle_{\Gamma} \qquad \forall v_h \in V_h.
\end{equation}

For each subdomain we introduce the local spaces
\[
V_{h,i} := \bigl\{v_h|_{\overline{\Omega}_i} : v_h \in V_h\bigr\}, \qquad 
V_{h,i}^0 := \bigl\{ v_h \in V_{h,i} : v_h = 0 \text{ on } \partial\Omega_i\bigr\}.
\]

Let $\{x_j : j \in \mathcal{I}\}$ be the set of vertices of $\mathcal{T}_h$ in $\overline{\Omega}$, and $\mathcal{I}_i \subset \mathcal{I}$ those belonging to $\overline{\Omega}_i$. The prolongation $R_i^{\,T} : V_{h,i} \to V_h$ is defined nodewise by
\[
R_i^{\,T} v_i (x_j) = 
\begin{cases}
v_i(x_j), & j \in \mathcal{I}_i,\\
0, & \text{otherwise}.
\end{cases}
\]
Note that $R_i^{\,T} v_i$ is the $H^1$-conforming finite element extension by zero of $v_i$ (the exact zero extension is generally not in $H^1(\Omega)$).

To obtain a more stable prolongation we use a partition of unity. Let $\{\chi_i\}_{i=1}^N \subset V_h$ be a family of nonnegative functions satisfying
\begin{equation}\label{eq:pum-property}
\begin{cases}
\displaystyle \sum_{i=1}^N \chi_i = 1 \text{ in } \overline{\Omega}, \quad
\operatorname{supp}(\chi_i) \subset \overline{\Omega}_i,\\[4pt]
0 \le \chi_i \le 1, \quad 
\|\nabla \chi_i\|_{L^\infty(\Omega_i)} \le C_{\mathrm{G}}\, \delta_i^{-1},
\end{cases}
\end{equation}
with a constant $C_{\mathrm{G}}$ independent of $h,\bar{d},\delta$. The weighted prolongation $\widetilde{R}_i^{\,T} : V_{h,i} \to V_h$ is then defined by
\[
\widetilde{R}_i^{\,T} v_i := R_i^{\,T}\bigl(\Pi_{h,i}(\chi_i v_i)\bigr),
\]
where $\Pi_{h,i} : C(\overline{\Omega}_i) \to V_{h,i}$ denotes the local nodal interpolation. The operator $\Pi_{h,i}$ satisfies the stability estimate (see e.g. \cite{MR520174})
\begin{equation}\label{eq:nodal-interp}
\|(I-\Pi_{h,i})v_i\|_{L^2(\Omega_i)} + h \|\nabla (I-\Pi_{h,i})v_i\|_{L^2(\Omega_i)}
\le C_I h \|\nabla v_i\|_{L^2(\Omega_i)} \quad \forall v_i \in V_{h,i},
\end{equation}
with $C_I$ depending only on the shape regularity of $\mathcal{T}_h$. By construction,
\begin{equation}\label{eq:pou}
\sum_{i=1}^N \widetilde{R}_i^{\,T}\bigl(v_h|_{\Omega_i}\bigr) = v_h \qquad \forall v_h \in V_h.
\end{equation}
From \eqref{eq:finite-overlap} we also have, for any $v_i \in H^1(\Omega_i)$,
\begin{equation}\label{eq:sumpou-ieq}
\begin{aligned}
\Bigl\|\sum_{i=1}^N \chi_i v_i\Bigr\|_{L^2(\Omega)}^2 &\le \Lambda \sum_{i=1}^N \|\chi_i v_i\|_{L^2(\Omega_i)}^2,\\
\Bigl\|\sum_{i=1}^N \nabla(\chi_i v_i)\Bigr\|_{L^2(\Omega)}^2 &\le \Lambda \sum_{i=1}^N \|\nabla(\chi_i v_i)\|_{L^2(\Omega_i)}^2.
\end{aligned}
\end{equation}

\subsection{Edge multiscale ansatz space}\label{subsec:edge-ansatz}

We now define the edge multiscale approximation space $V_{\mathrm{ms},\ell}$. For simplicity we describe the construction for $d=2$; the extension to $d=3$ is straightforward.

Fix a level parameter $\ell \in \mathbb{N}$. Let $\{\Gamma_i^k\}_{k=1}^{n_i}$ be a partition of $\partial\Omega_i$ into non-overlapping coarse edges. On each edge $\Gamma_i^k$ we consider the space $V_{i,\ell}^k \subset C(\partial\Omega_i)$ spanned by hierarchical bases up to level $\ell$, continuous across the whole boundary. The local edge space on $\partial\Omega_i$ is the smallest linear space containing all $V_{i,\ell}^k$, i.e.
\begin{equation}\label{eq:local-edge}
V_{i,\ell} := \operatorname{span}\bigl\{\psi_{i,\ell}^j : 1 \le j \le n_i 2^{\ell}\bigr\},
\end{equation}
where $\{\psi_{i,\ell}^j\}_{j=1}^{n_i2^{\ell}}$ are the corresponding nodal basis functions and $\{x_{i,\ell}^j\}_{j=1}^{n_i2^{\ell}}$ the associated nodes. Figure~\ref{fig:grid-points-local} illustrates the distribution of these nodes for $n_i=4$ and $\ell = 0,1,2$.

\begin{figure}[htbp]
\centering
\begin{tikzpicture}[scale=1]
	\draw[step=1.0, black, very thick] (0,0) grid (4,4);
	\foreach \x in {0,...,4}
	\foreach \y in {0,...,4}{
		\fill (1.0*\x, 1.0*\y) circle (1.5pt);
	}
	\fill [red] (2.0 , 2.0) circle (1.5pt);
	\filldraw[draw=black,fill=green,opacity=0.4] (1.8,1.8) rectangle (3.2,3.2);
	\node at (1.8,1.8) {$\Omega_i$};
	
	\filldraw[draw=black,fill=green,opacity=0.4] (4.8,3.8) rectangle (6.2,5.2);
	\foreach \x in {4.8,6.2}
	\foreach \y in {3.8,5.2}{
		\fill [blue] (\x, \y) circle (1.5pt);
	}
	
	\filldraw[draw=black,fill=green,opacity=0.4] (4.8,1.8) rectangle (6.2,3.2);
	\foreach \x in {4.8,5.5,6.2}
	\foreach \y in {1.8,3.2}{
		\fill [blue] (\x, \y) circle (1.5pt);
	}
	\foreach \x in {4.8,6.2}
	\foreach \y in {2.5}{
		\fill [blue] (\x, \y) circle (1.5pt);
	}
	
	\filldraw[draw=black,fill=green,opacity=0.4] (4.8,-0.2) rectangle (6.2,1.2);
	\foreach \x in {4.8,5.15,5.5,5.85,6.2}
	\foreach \y in {-0.2,1.2}{
		\fill [blue] (\x, \y) circle (1.5pt);
	}
	\foreach \x in {4.8,6.2}
	\foreach \y in {-0.2,0.15,0.5,0.85,1.2}{
		\fill [blue] (\x, \y) circle (1.5pt);
	}
	
	\draw [-to] (5,4.6) -- (3.0,2.6);
	\draw [-to] (5,2.1) -- (3.0,2.1);
	\draw [-to] (5,0.6) -- (3.0,1.9);
	
	\node at (8.5,4.5) {$\ell=0$};
	\node at (8.5,2.5) {$\ell=1$};
	\node at (8.5,0.5) {$\ell=2$};
\end{tikzpicture}
\caption{Grid points $\{x_{i,\ell}^j\}$ on $\partial\Omega_i$ for $n_i=4$ and levels $\ell=0,1,2$.}
\label{fig:grid-points-local}
\end{figure}

Introduce the local bilinear form on $\Omega_i$,
\[
a_i(v,w) := \sum_{|\bm{p}|,|\bm{q}|\le 1} \int_{\Omega_i} a_{\bm{p},\bm{q}}\,
\frac{\partial^{\bm{q}}v}{\partial\bm{x}^{\bm{q}}}\,
\frac{\partial^{\bm{p}}w}{\partial\bm{x}^{\bm{p}}}\,\mathrm{d}\bm{x},
\qquad v,w \in H^1(\Omega_i).
\]
The local multiscale space on $\Omega_i$ is defined as
\begin{equation}\label{eq:local-multiscale}
V_{\mathrm{ms},\ell}^i := V_{\mathrm{ms},\ell}^{i;\partial} \oplus V_{\mathrm{ms},\ell}^{i;0},
\end{equation}
where
\[
V_{\mathrm{ms},\ell}^{i;\partial} := \operatorname{span}\bigl\{
\mathcal{A}_{h,i}^{-1}(\psi_{i,\ell}^j) : 1 \le j \le n_i 2^{\ell}\bigr\},
\qquad
V_{\mathrm{ms},\ell}^{i;0} := \operatorname{span}\{b_{h,i}\}.
\]
Here $\mathcal{A}_{h,i}^{-1}(\psi_{i,\ell}^j) \in V_{h,i}$ is the discrete generalized harmonic extension of $\psi_{i,\ell}^j$, i.e. the solution of
\begin{equation}\label{eq:Li}
\begin{cases}
a_i(v_{h,i}, w_{h,i}) = 0 & \forall w_{h,i} \in V_{h,i}^0,\\
v_{h,i} = \psi_{i,\ell}^j & \text{on } \partial\Omega_i,
\end{cases}
\end{equation}
and $b_{h,i} \in V_{h,i}^0$ satisfies
\begin{equation}\label{eq:Li-bubble}
a_i(b_{h,i}, w_{h,i}) = 1 \qquad \forall w_{h,i} \in V_{h,i}^0.
\end{equation}
The dimension of $V_{\mathrm{ms},\ell}^i$ is $n_i 2^{\ell}+1$.

Finally, the global edge multiscale space is obtained by applying the weighted prolongation:
\begin{equation}\label{eq:global-multiscale}
V_{\mathrm{ms},\ell} := V_{\mathrm{ms},\ell}^{\partial} \oplus V_{\mathrm{ms},\ell}^0,
\end{equation}
with
\[
V_{\mathrm{ms},\ell}^{\partial} := \bigoplus_{i=1}^N \widetilde{R}_i^{\,T}\bigl(V_{\mathrm{ms},\ell}^{i;\partial}\bigr),
\qquad
V_{\mathrm{ms},\ell}^{0} := \bigoplus_{i=1}^N \widetilde{R}_i^{\,T}\bigl(V_{\mathrm{ms},\ell}^{i;0}\bigr).
\]
By construction, $V_{\mathrm{ms},\ell} \subset V_h$.

\subsection{Conforming Galerkin approximation}\label{sec:edge}

The Edge Multiscale Finite Element Method (EMsFEM) reads: find $u_{\mathrm{ms},\ell} \in V_{\mathrm{ms},\ell}$ such that
\begin{equation}\label{eqn:weakform_h-romb}
a(u_{\mathrm{ms},\ell}, v_{\mathrm{ms},\ell}) = (f, v_{\mathrm{ms},\ell})_{\Omega} + \langle g, v_{\mathrm{ms},\ell}\rangle_{\Gamma}
\qquad \forall v_{\mathrm{ms},\ell} \in V_{\mathrm{ms},\ell}.
\end{equation}
The space $V_{\mathrm{ms},\ell}$ is independent of the source term $f$, which makes the method particularly efficient when multiple right-hand sides are considered, e.g. in uncertainty quantification.

The complete procedure is summarised in Algorithm~\ref{algorithm:wavelet}.

\begin{algorithm}[htbp]
\caption{Edge Multiscale Finite Element Method (EMsFEM)}
\label{algorithm:wavelet}
\begin{algorithmic}[1]
\REQUIRE Level parameter $\ell \in \mathbb{N}$; coarse subdomains $\Omega_i$ and coarse edges $\{\Gamma_i^k\}_{k=1}^{n_i}$; edge spaces $V_{i,\ell}^k$ up to level $\ell$ on each $\Gamma_i^k$.
\ENSURE Approximation $u_{\mathrm{ms},\ell}$.
\STATE Construct the local edge space $V_{i,\ell}$ via \eqref{eq:local-edge}.
\STATE Compute the local multiscale spaces $V_{\mathrm{ms},\ell}^i$ via \eqref{eq:local-multiscale}.
\STATE Assemble the global multiscale space $V_{\mathrm{ms},\ell}$ via \eqref{eq:global-multiscale}.
\STATE Solve the Galerkin problem \eqref{eqn:weakform_h-romb} for $u_{\mathrm{ms},\ell}$.
\end{algorithmic}
\end{algorithm}

\subsection{Error analysis}\label{subsec:error}

To analyse the convergence of Algorithm~\ref{algorithm:wavelet}, we first define a projection operator onto the edge multiscale space. Let $\mathcal{I}_{i,\ell} : L^2(\partial\Omega_i) \to V_{i,\ell}$ be the $L^2$-orthogonal projection, and set
\begin{equation}\label{eq:projectionEDGE}
\mathcal{P}_{i,\ell} v := \mathcal{A}_{h,i}^{-1}\bigl(\mathcal{I}_{i,\ell} v\bigr),
\qquad v \in V_{h,i}(\partial\Omega_i).
\end{equation}
Note that $\mathcal{P}_{i,\ell} v|_{\partial\Omega_i} = \mathcal{I}_{i,\ell} v$. Using the partition-of-unity representation \eqref{eq:pou}, the global projection $\mathcal{P}_\ell : V_h \to V_{\mathrm{ms},\ell}^{\partial}$ is defined by
\begin{equation}\label{eq:glo-proj}
\mathcal{P}_{\ell}(v) := \sum_{i=1}^N \widetilde{R}_i^{\,T}\bigl(\mathcal{P}_{i,\ell}(v|_{\partial\Omega_i})\bigr).
\end{equation}

Next we split the fine-scale solution $u_h$ locally. On each $\Omega_i$ write $u_h|_{\Omega_i} = u_{h,i}^0 + u_{h,i}^{\partial}$, where $u_{h,i}^0 \in V_{h,i}^0$ satisfies
\begin{equation}\label{eq:local-bubble}
a_i(u_{h,i}^0, v_h) = (f, v_h)_{\Omega_i} \qquad \forall v_h \in V_{h,i}^0,
\end{equation}
and $u_{h,i}^{\partial} := u_h|_{\Omega_i} - u_{h,i}^0$ is discrete harmonic, i.e.
\[
a_i(u_{h,i}^{\partial}, v_h) = 0 \qquad \forall v_h \in V_{h,i}^0.
\]
Standard energy arguments together with Friedrichs' inequality give
\begin{align}
d_i \|\nabla u_{h,i}^0\|_{L^2(\Omega_i)} + \|u_{h,i}^0\|_{L^2(\Omega_i)} &\lesssim d_i^2 \|f\|_{L^2(\Omega_i)}, \label{eq:bubble-est}\\
\|u_{h,i}^{\partial}\|_{L^2(\Omega_i)} &\lesssim d_i^2 \|f\|_{L^2(\Omega_i)} + d_i \|u_h\|_{L^2(\Omega_i)}, \label{eq:harm-L2}\\
\|\nabla u_{h,i}^{\partial}\|_{L^2(\Omega_i)} &\lesssim d_i \|f\|_{L^2(\Omega_i)} + \|\nabla u_h\|_{L^2(\Omega_i)}. \label{eq:harm-H1}
\end{align}

\begin{lemma}\label{lem:loc-bubble}
Let $u_{h,i}^0$ and $b_{h,i}$ be defined by \eqref{eq:local-bubble} and \eqref{eq:Li-bubble}, respectively. Then
\[
\min_{c\in\mathbb{R}} \|u_{h,i}^0 - c\,b_{h,i}\|_{H^1(\Omega_i)} \lesssim d_i \|f\|_{L^2(\Omega_i)}.
\]
\end{lemma}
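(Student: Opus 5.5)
Both $u_{h,i}^0$ and $b_{h,i}$ lie in $V_{h,i}^0$, and both solve a Galerkin problem for $a_i$ on that space. The only difference is the right-hand side: $f$ for $u_{h,i}^0$ versus the constant $1$ for $b_{h,i}$. Subtracting \eqref{eq:Li-bubble} times $c$ from \eqref{eq:local-bubble} gives, for every $c\in\mathbb{R}$,
\[
a_i(u_{h,i}^0 - c\,b_{h,i}, v_h) = (f - c, v_h)_{\Omega_i} \qquad \forall v_h\in V_{h,i}^0 .
\]
The difference is therefore the discrete local solution with datum $f-c$. The plan is to make a good choice of $c$ and then repeat the energy argument behind \eqref{eq:bubble-est} for this new datum.

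\emph{Stability step.} Take $v_h = w := u_{h,i}^0 - c\,b_{h,i}\in V_{h,i}^0$ and use coercivity of $a_i$ on $V_{h,i}^0$. This is inherited from the global coercivity by extending $w$ by zero, which is admissible because $w$ vanishes on $\partial\Omega_i$. We get
\[
\alpha\|w\|_{H^1(\Omega_i)}^2 \le (f-c,w)_{\Omega_i} \le \|f-c\|_{L^2(\Omega_i)}\|w\|_{L^2(\Omega_i)}.
\]
Friedrichs' inequality on $\Omega_i$ for functions vanishing on $\partial\Omega_i$ gives $\|w\|_{L^2(\Omega_i)}\lesssim d_i\|\nabla w\|_{L^2(\Omega_i)}\le d_i\|w\|_{H^1(\Omega_i)}$. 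Hence
\[
\|w\|_{H^1(\Omega_i)}\lesssim d_i\,\|f-c\|_{L^2(\Omega_i)} .
\]

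\emph{Choice of $c$.} Take $c$ to be the mean value of $f$ over $\Omega_i$, i.e. the $L^2(\Omega_i)$-projection of $f$ onto constants. Then $\|f-c\|_{L^2(\Omega_i)}\le\|f\|_{L^2(\Omega_i)}$. The minimum over $c$ is bounded by this particular choice, which gives the claimed estimate $\min_c\|u_{h,i}^0-c\,b_{h,i}\|_{H^1(\Omega_i)}\lesssim d_i\|f\|_{L^2(\Omega_i)}$. One could even take $c=0$, which reduces the claim to \eqref{eq:bubble-est} with the $H^1$ norm, $d_i\le\operatorname{diam}\Omega$ being bounded. The mean-value choice is the natural one, though, and does not rely on that.

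\emph{Main obstacle.} The only delicate point is justifying the Friedrichs constant scaling like $d_i$ uniformly in $i$, and the coercivity on the local space. The coercivity is immediate via the zero extension, which lies in $V_h\subset H^1(\Omega)$. For Friedrichs, I would use the standard bound $\|w\|_{L^2(\Omega_i)}\le d_i\|\nabla w\|_{L^2(\Omega_i)}$ for $w\in H^1_0(\Omega_i)$: the subdomain lies in a slab of width $d_i$, so integrating along one coordinate direction gives the bound.

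One point needs care when $\Omega_i$ touches $\Gamma$. Functions in $V_{h,i}^0$ vanish on all of $\partial\Omega_i$, including the part on $\Gamma$, so the Friedrichs argument is unaffected.
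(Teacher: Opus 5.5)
Your proof is correct and follows the argument the paper uses for this step inside the proof of Theorem~\ref{thm:error}: take $c=\bar f_i$, the mean of $f$ over $\Omega_i$, subtract the two Galerkin problems, and bound the difference using coercivity of $a_i$ on $V_{h,i}^0$ (via zero extension) together with Friedrichs' inequality with constant $\lesssim d_i$. You were also right to read \eqref{eq:Li-bubble} as $a_i(b_{h,i},w_{h,i})=(1,w_{h,i})_{\Omega_i}$, since the literal form cannot hold (take $w_{h,i}=0$), and your remark that $c=0$ already gives the stated bound is correct.
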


The following assumption is essential for the analysis of discrete harmonic functions.

\begin{assumption}\label{as:veryweak}
For any $v_{h,i} \in V_{h,i}$ satisfying $a_i(v_{h,i}, w_h)=0$ for all $w_h \in V_{h,i}^0$, we assume
\begin{align}
\|v_{h,i}\|_{L^2(\Omega_i)} &\lesssim d_i^{1/2} \|v_{h,i}|_{\partial\Omega_i}\|_{L^2(\partial\Omega_i)}, \label{eq:apriori-L2}\\
\|\chi_i \nabla v_{h,i}\|_{L^2(\Omega_i)} &\lesssim \delta_i^{-1} \|v_{h,i}\|_{L^2(\Omega_i)}. \label{eq:caccioppoli}
\end{align}
\end{assumption}
Inequality \eqref{eq:apriori-L2} is an $L^2$ a priori estimate for discrete harmonic functions, while \eqref{eq:caccioppoli} is a Caccioppoli-type estimate. Assumption~\ref{as:veryweak} has been verified for Helmholtz operators and indefinite convection-diffusion problems \cite{fu2024edge,gl2026conv-diff}, and similar results are available at the PDE level for a wide class of operators \cite{MR3980476,MR4265650,MR4343247,MR4860976,WLL2026,MR5060905,MR3939320}.

\begin{theorem}\label{thm:error}
Let Assumption~\ref{as:veryweak} hold, and let $u_h \in V_h$ and $u_{\mathrm{ms},\ell} \in V_{\mathrm{ms},\ell}$ be the solutions of \eqref{eq:discrete-global} and \eqref{eqn:weakform_h-romb}, respectively. Then
\[
\|u_h - u_{\mathrm{ms},\ell}\|_{H^1(\Omega)} \lesssim \bar{d}
\Bigl( 1 + \frac{\bar{d}}{\delta} + \frac{ 2^{-\ell/2}}{\delta} \Bigr) \|f\|_{L^2(\Omega)}.
\]
\end{theorem}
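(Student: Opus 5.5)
Since $a(\cdot,\cdot)$ is coercive and bounded on $H^1(\Omega)$ and $V_{\mathrm{ms},\ell}\subset V_h$, Céa's lemma applied to \eqref{eq:discrete-global} and \eqref{eqn:weakform_h-romb} gives
\[
\|u_h-u_{\mathrm{ms},\ell}\|_{H^1(\Omega)}\le \tfrac{\beta}{\alpha}\inf_{v\in V_{\mathrm{ms},\ell}}\|u_h-v\|_{H^1(\Omega)}.
\]
It therefore suffices to construct one good element of $V_{\mathrm{ms},\ell}$. I take
\[
v:=\sum_{i=1}^N \widetilde R_i^{\,T}\bigl(\mathcal P_{i,\ell}(u_h|_{\partial\Omega_i}) + c_i b_{h,i}\bigr),
\]
where $c_i$ is the minimizer in Lemma~\ref{lem:loc-bubble}. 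By \eqref{eq:pou}, $u_h=\sum_i\widetilde R_i^{\,T}(u_h|_{\Omega_i})$, and $u_h|_{\Omega_i}=u_{h,i}^0+u_{h,i}^\partial$. Hence
\[
u_h-v=\sum_i \widetilde R_i^{\,T}\bigl(e_i^0+e_i^\partial\bigr),\qquad e_i^0:=u_{h,i}^0-c_ib_{h,i},\quad e_i^\partial:=u_{h,i}^\partial-\mathcal P_{i,\ell}u_h .
\]
Note that $e_i^\partial$ is discrete harmonic in $\Omega_i$, with trace $(I-\mathcal I_{i,\ell})(u_h|_{\partial\Omega_i})$, because $u_{h,i}^0$ vanishes on $\partial\Omega_i$.

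Next I reduce the global norm to local quantities. Writing $\widetilde R_i^{\,T}w=\chi_i w-(I-\Pi_{h,i})(\chi_iw)$, extended by zero, I use \eqref{eq:nodal-interp} and an inverse-type bound on the product $\chi_i w$ to absorb the interpolation error into $\|\nabla(\chi_i w)\|+\|\chi_iw\|$. Then \eqref{eq:sumpou-ieq} gives
\[
\|u_h-v\|_{H^1}^2\lesssim \Lambda\sum_i\bigl(\|\chi_i\nabla w_i\|^2+\delta_i^{-2}\|w_i\|^2+\|w_i\|^2\bigr)_{L^2(\Omega_i)},\qquad w_i=e_i^0+e_i^\partial.
\]

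\emph{Bubble part.} By Lemma~\ref{lem:loc-bubble}, $\|e_i^0\|_{H^1(\Omega_i)}\lesssim d_i\|f\|_{L^2(\Omega_i)}$. Since $e_i^0\in V_{h,i}^0$, Friedrichs' inequality gives $\|e_i^0\|_{L^2}\lesssim d_i^2\|f\|$, so its $\delta_i^{-1}$-weighted term is $\lesssim (d_i^2/\delta_i)\|f\|$. This produces the $\bar d(1+\bar d/\delta)$ contribution.

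\emph{Edge part.} By \eqref{eq:caccioppoli}, $\|\chi_i\nabla e_i^\partial\|\lesssim\delta_i^{-1}\|e_i^\partial\|_{L^2(\Omega_i)}$, so everything reduces to an $L^2$ bound. By \eqref{eq:apriori-L2},
\[
\|e_i^\partial\|_{L^2(\Omega_i)}\lesssim d_i^{1/2}\|(I-\mathcal I_{i,\ell})u_h\|_{L^2(\partial\Omega_i)}.
\]
This is where I expect the main obstacle: bounding the edge projection error in terms of $f$. I would use that the hierarchical $L^2$ projection onto level $\ell$ of mesh width $\sim d_i2^{-\ell}$ satisfies $\|(I-\mathcal I_{i,\ell})\phi\|_{L^2(\partial\Omega_i)}\lesssim (d_i2^{-\ell})^{1/2}\|\phi\|_{H^{1/2}(\partial\Omega_i)}$. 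Combined with the scaled trace inequality on $\Omega_i$, this gives
\[
\|(I-\mathcal I_{i,\ell})u_h\|_{L^2(\partial\Omega_i)}\lesssim 2^{-\ell/2}d_i^{1/2}\bigl(\|\nabla u_h\|_{L^2(\Omega_i)}+d_i^{-1}\|u_h\|_{L^2(\Omega_i)}\bigr).
\]
Hence $\|e_i^\partial\|_{L^2}\lesssim 2^{-\ell/2}d_i\|u_h\|_{H^1(\Omega_i)}$ up to scaling. Summing over $i$ with finite overlap and using the global stability $\|u_h\|_{H^1(\Omega)}\lesssim\|f\|_{L^2(\Omega)}$, the edge terms contribute $\bar d\,2^{-\ell/2}\delta^{-1}\|f\|$.

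The delicate point is the $d_i$-scaling in the trace/projection step. I would try to exploit the local discrete-harmonic splitting, i.e.\ \eqref{eq:harm-L2} and \eqref{eq:harm-H1}, so that the powers of $d_i$ match the claimed $\bar d\,2^{-\ell/2}/\delta$ rather than a worse power. Collecting the bubble and edge contributions and summing over $i$ then yields the stated bound.
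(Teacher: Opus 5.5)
You follow the paper's proof closely. You use C\'ea's lemma and the same test function; your minimizer $c_i$ in place of the paper's $\bar f_i$ makes no difference. You split via the partition of unity into bubble and edge errors, use Friedrichs for the bubble part, and use Caccioppoli plus the $L^2$ a priori bound for the edge part. Your inverse-estimate treatment of the interpolation error in $\widetilde R_i^{\,T}$ is more explicit than the paper's ``higher order'' remark.

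The gap is exactly at the step you flag. Your displayed trace/projection bound combined with \eqref{eq:apriori-L2} gives
\[
\|e_i^\partial\|_{L^2(\Omega_i)}\lesssim 2^{-\ell/2}\bigl(d_i\|\nabla u_h\|_{L^2(\Omega_i)}+\|u_h\|_{L^2(\Omega_i)}\bigr),
\]
not $2^{-\ell/2}d_i\|u_h\|_{H^1(\Omega_i)}$. The zero-order term has lost its factor $d_i$. After multiplying by $\delta_i^{-1}$ and summing, you only obtain $2^{-\ell/2}\delta^{-1}\|f\|_{L^2(\Omega)}$, which is a factor $\bar d^{-1}$ worse than the claim.

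The repair you suggest via \eqref{eq:harm-L2}--\eqref{eq:harm-H1} does not address this. The splitting does not change the trace on $\partial\Omega_i$. \eqref{eq:harm-L2} could only help through the factor $d_i$ in front of $\|u_h\|_{L^2(\Omega_i)}$, and that cannot hold in general. If $f=0$ on $\Omega_i$, then $u_{h,i}^0=0$ and $u_{h,i}^\partial=u_h|_{\Omega_i}$, so it would read $\|u_h\|_{L^2(\Omega_i)}\lesssim d_i\|u_h\|_{L^2(\Omega_i)}$.

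The missing idea is that $\mathcal I_{i,\ell}$ reproduces constants. The nodal functions $\psi_{i,\ell}^j$ sum to $1$ on $\partial\Omega_i$, so $1\in V_{i,\ell}$ and $(I-\mathcal I_{i,\ell})u_h=(I-\mathcal I_{i,\ell})(u_h-\bar u_i)$, where $\bar u_i$ is the mean of $u_h$ over $\Omega_i$. Apply your own bound to $u_h-\bar u_i$ and use Poincar\'e, $\|u_h-\bar u_i\|_{L^2(\Omega_i)}\lesssim d_i\|\nabla u_h\|_{L^2(\Omega_i)}$. This gives
\[
\|(I-\mathcal I_{i,\ell})u_h\|_{L^2(\partial\Omega_i)}\lesssim d_i^{1/2}2^{-\ell/2}\|\nabla u_h\|_{L^2(\Omega_i)},
\]
a slightly sharper form of the bound the paper uses. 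The paper passes through $\|u_h\|_{H^{1/2}(\partial\Omega_i)}\lesssim\|u_h\|_{H^1(\Omega_i)}$, whose uniformity in $d_i$ rests on the same observation, i.e.\ on working with the $H^{1/2}$ seminorm.

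Hence $\|e_i^\partial\|_{L^2(\Omega_i)}\lesssim d_i2^{-\ell/2}\|\nabla u_h\|_{L^2(\Omega_i)}$. Your Caccioppoli step and finite-overlap summation then yield $(1+\delta^{-1})\bar d\,2^{-\ell/2}\|\nabla u_h\|_{L^2(\Omega)}\lesssim \bar d\bigl(1+2^{-\ell/2}/\delta\bigr)\|f\|_{L^2(\Omega)}$, which completes the argument.
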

\begin{proof}
Define the local average $\bar{f}_i := |\Omega_i|^{-1}\int_{\Omega_i} f\,\mathrm{d}\bm{x}$ and set 
$\widehat{u}_{h,i}^0 := \bar{f}_i b_{h,i}$. Let $e_{h,i}^0 := u_{h,i}^0 - \widehat{u}_{h,i}^0$. 
By Lemma~\ref{lem:loc-bubble} and the triangle inequality,
\[
\|e_{h,i}^0\|_{H^1(\Omega_i)} \lesssim d_i \|f\|_{L^2(\Omega_i)}.
\]
More precisely, using Friedrichs' inequality and the coercivity of $a_i(\cdot,\cdot)$ yields
\begin{equation}\label{eq:loc-est1}
d_i \|\nabla e_{h,i}^0\|_{L^2(\Omega_i)} + \|e_{h,i}^0\|_{L^2(\Omega_i)} \lesssim d_i^2 \|f\|_{L^2(\Omega_i)}.
\end{equation}

Next, define $\widehat{u}_{h,i}^{\partial} := \mathcal{P}_{i,\ell}(u_h|_{\partial\Omega_i})$ and let 
$e_{h,i}^{\partial} := u_{h,i}^{\partial} - \widehat{u}_{h,i}^{\partial}$. Since $e_{h,i}^{\partial}$ is discrete harmonic 
and satisfies $e_{h,i}^{\partial}|_{\partial\Omega_i} = (u_h - \mathcal{I}_{i,\ell} u_h)|_{\partial\Omega_i}$, 
Assumption~\ref{as:veryweak} gives
\begin{equation}\label{eq:loc-est2}
\begin{aligned}
\|e_{h,i}^{\partial}\|_{L^2(\Omega_i)} &\lesssim d_i^{1/2} \|u_h - \mathcal{I}_{i,\ell} u_h\|_{L^2(\partial\Omega_i)}, \\
\|\chi_i \nabla e_{h,i}^{\partial}\|_{L^2(\Omega_i)} &\lesssim \delta_i^{-1} \|e_{h,i}^{\partial}\|_{L^2(\Omega_i)}.
\end{aligned}
\end{equation}

Using the approximation property of the $L^2$-projection $\mathcal{I}_{i,\ell}$ on the boundary 
(which follows from the wavelet construction with $d_i2^{-\ell}$ representing the mesh size on $\partial\Omega_i$), we have
\[
\|u_h - \mathcal{I}_{i,\ell} u_h\|_{L^2(\partial\Omega_i)} \lesssim d_i^{1/2}2^{-\ell/2} \|u_h\|_{H^{1/2}(\partial\Omega_i)} 
\lesssim d_i^{1/2}2^{-\ell/2} \|u_h\|_{H^1(\Omega_i)}.
\]

From the coercivity of $a(\cdot,\cdot)$, the inclusion $V_{\mathrm{ms},\ell} \subset V_h$, and Galerkin orthogonality, 
we obtain the best-approximation estimate
\[
\|u_h - u_{\mathrm{ms},\ell}\|_{H^1(\Omega)} \lesssim \min_{v \in V_{\mathrm{ms},\ell}} \|u_h - v\|_{H^1(\Omega)}.
\]

Choosing $v = \sum_{i=1}^N \widetilde{R}_i^{\,T}(\widehat{u}_{h,i}^0 + \widehat{u}_{h,i}^{\partial}) \in V_{\mathrm{ms},\ell}$ yields
\[
\|u_h - u_{\mathrm{ms},\ell}\|_{H^1(\Omega)} \lesssim
\Bigl\| u_h - \sum_{i=1}^N \widetilde{R}_i^{\,T}(\widehat{u}_{h,i}^0 + \widehat{u}_{h,i}^{\partial}) \Bigr\|_{H^1(\Omega)}.
\]

Using the partition-of-unity property \eqref{eq:pou}, $u_h = \sum_{i=1}^N \widetilde{R}_i^{\,T}(u_h|_{\Omega_i})$, 
together with the triangle inequality, we get
\[
\|u_h - u_{\mathrm{ms},\ell}\|_{H^1(\Omega)} \lesssim
\Bigl\| \sum_{i=1}^N \widetilde{R}_i^{\,T}(e_{h,i}^0) \Bigr\|_{H^1(\Omega)}
+ \Bigl\| \sum_{i=1}^N \widetilde{R}_i^{\,T}(e_{h,i}^{\partial}) \Bigr\|_{H^1(\Omega)}.
\]

For the first term, note that $\widetilde{R}_i^{\,T}(e_{h,i}^0) = \chi_i e_{h,i}^0$ up to interpolation error 
(which is of higher order). Using the finite overlap \eqref{eq:finite-overlap} and the stability of the partition of unity,
\[
\Bigl\| \sum_{i=1}^N \widetilde{R}_i^{\,T}(e_{h,i}^0) \Bigr\|_{H^1(\Omega)}^2 \lesssim 
\sum_{i=1}^N \|\chi_i e_{h,i}^0\|_{H^1(\Omega_i)}^2.
\]

Combined with \eqref{eq:loc-est1} and the bound $\|\nabla\chi_i\|_{L^\infty(\Omega_i)} \lesssim \delta_i^{-1}$, this gives
\[
\Bigl\| \sum_{i=1}^N \widetilde{R}_i^{\,T}(e_{h,i}^0) \Bigr\|_{H^1(\Omega)} \lesssim 
\Bigl( \bar{d} + \frac{\bar{d}^2}{\delta} \Bigr) \|f\|_{L^2(\Omega)}.
\]

For the second term, a similar argument using \eqref{eq:loc-est2} and the approximation estimate for $\mathcal{I}_{i,\ell}$ yields
\[
\Bigl\| \sum_{i=1}^N \widetilde{R}_i^{\,T}(e_{h,i}^{\partial}) \Bigr\|_{H^1(\Omega)} \lesssim 
\frac{\bar{d} 2^{-\ell/2}}{\delta} \|u_h\|_{H^1(\Omega)}.
\]

Since $\|u_h\|_{H^1(\Omega)} \lesssim \|f\|_{L^2(\Omega)}$ by the well-posedness of \eqref{eq:discrete-global}, 
we finally obtain
\[
\|u_h - u_{\mathrm{ms},\ell}\|_{H^1(\Omega)} \lesssim
\Bigl( \bar{d} + \frac{\bar{d}^2}{\delta} + \frac{\bar{d} 2^{-\ell/2}}{\delta} \Bigr) \|f\|_{L^2(\Omega)},
\]
which completes the proof.
\end{proof}

\section{Numerical experiments}\label{sec:num}

We present several numerical examples to demonstrate the performance of Algorithm \ref{algorithm:wavelet}. We use $u_h$ defined in \eqref{eq:discrete-global} as the reference solution. For all square coarse subdomains in these two-dimensional tests, we have $n_i=4$ coarse edges on $\partial\Omega_i$. For the Darcy and convection-diffusion tests, the three level parameters are denoted by $\ell=0,1,2$, so the numbers of local boundary basis functions are $n_i 2^{\ell}=4,8,16$, respectively. Equivalently, these are the numbers of local discrete harmonic extensions before adding the bubble function. In all tests, the overlap is uniform, i.e. $\delta_i=\delta$ for all $i$. The common overlap width is varied as
\[
\delta/h = 1,\ldots,16.
\]
The partition-of-unity functions are constructed with cubic smooth transition weights and are normalized nodally, so that \eqref{eq:pum-property} is satisfied. The horizontal axis in the following figures is therefore the common overlap width measured in fine-grid layers. The corresponding coarse dimensions for $\ell=0,1,2$ in the first two tests are $1280$, $2304$, and $4352$.
For the Helmholtz test, only the richest level $\ell=2$ with $n_i 2^{\ell}=16$ is reported, because the fine-grid Helmholtz solution already contains significant phase dispersion at the tested wave number.

For each problem, we also show a representative field comparison using $\ell=2$ (hence $n_i 2^{\ell}=16$) and $\delta/h=2$. In these comparisons the first two panels use the same color scale so that the reference and multiscale fields can be compared directly; the third panel shows the pointwise absolute difference.

\subsection{Darcy problem with a highly oscillatory coefficient}

The first test is the elliptic Darcy problem
\[
-\nabla\cdot(a(x)\nabla u)=1 \quad \text{in } \Omega=(0,1)^2,
\qquad
u=0 \quad \text{on } \partial\Omega .
\]
The fine grid contains $256^2$ elements and the coarse grid contains $16\times16$ blocks; hence $h=1/256$, $H=1/16$, and each coarse block contains $16\times16$ fine elements. The coefficient $a(x)$ is shown in Figure~\ref{fig:darcy-coeff-combined}. Its minimum and maximum values are $1.000\times10^{0}$ and $1.000\times10^{4}$, respectively, so the contrast is $1.000\times10^{4}$.

\begin{figure}[htbp]
\centering
\includegraphics[width=0.58\linewidth]{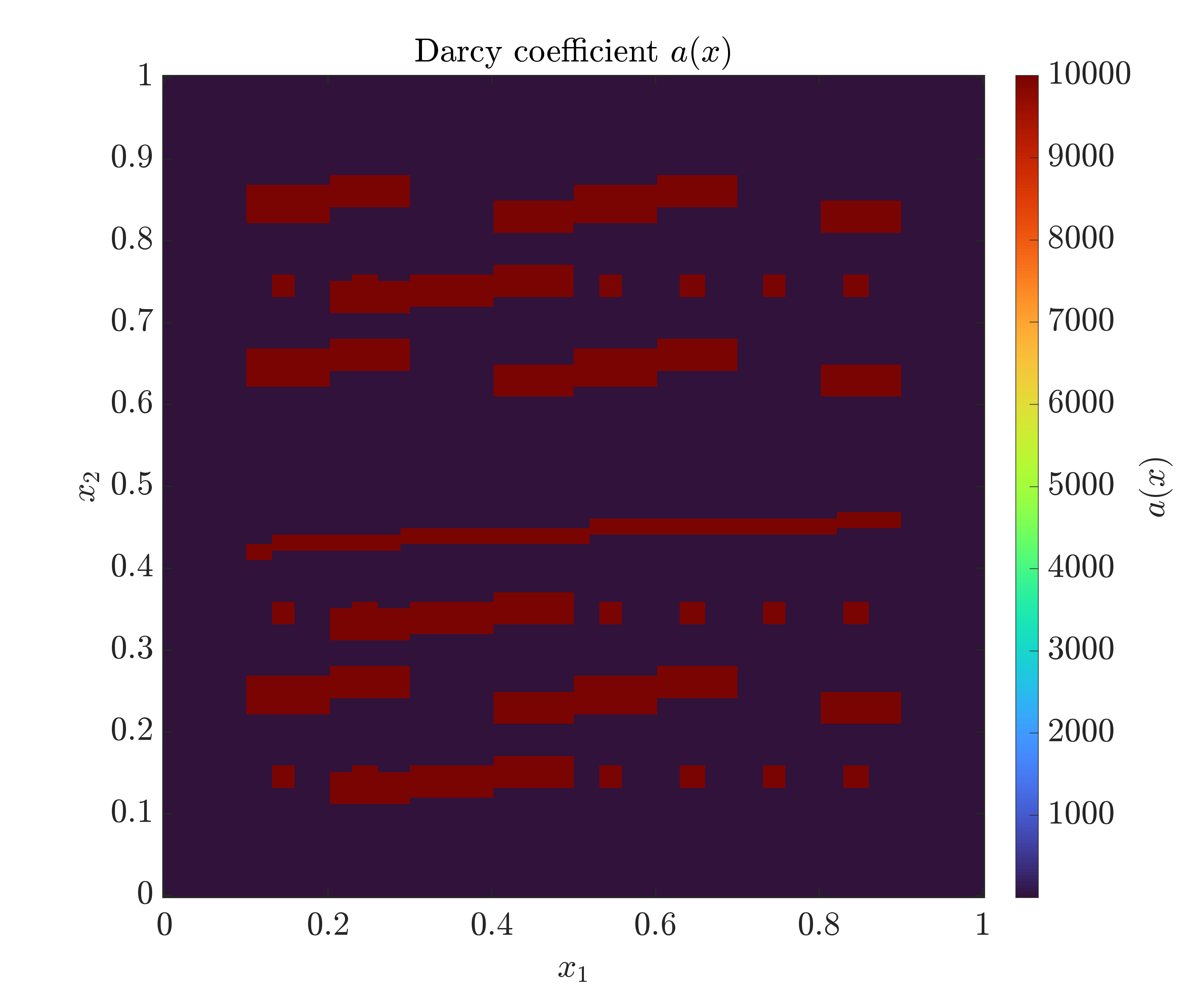}
\caption{Highly oscillatory Darcy coefficient $a(x)$ on the fine grid.}
\label{fig:darcy-coeff-combined}
\end{figure}

For this problem the relative errors are measured in the coefficient-weighted $L^2$ norm and the energy norm,
\[
e_{L^2}=\frac{\|u_h-u_{\mathrm{ms},\ell}\|_{L^2_a}}{\|u_h\|_{L^2_a}},
\qquad
e_{H^1}=\frac{\|u_h-u_{\mathrm{ms},\ell}\|_{a}}{\|u_h\|_{a}},
\]
where $\|v\|^2_{L^2_a}\coloneqq (av,v)_{\Omega}$ and $\|v\|^2_{a}\coloneqq (a\nabla v,\nabla v)_{\Omega}$ for any $v\in H^1(\Omega)$. 
Figure~\ref{fig:darcy-snapshot-combined} compares the fine-grid solution with a representative multiscale solution using $\ell=2$ (so $n_i 2^{\ell}=16$) and $\delta/h=2$. The multiscale field captures the coarse structure of the reference solution, while the absolute-difference panel localizes the remaining discrepancy.
\begin{figure}[htbp]
\centering
\includegraphics[width=0.98\linewidth]{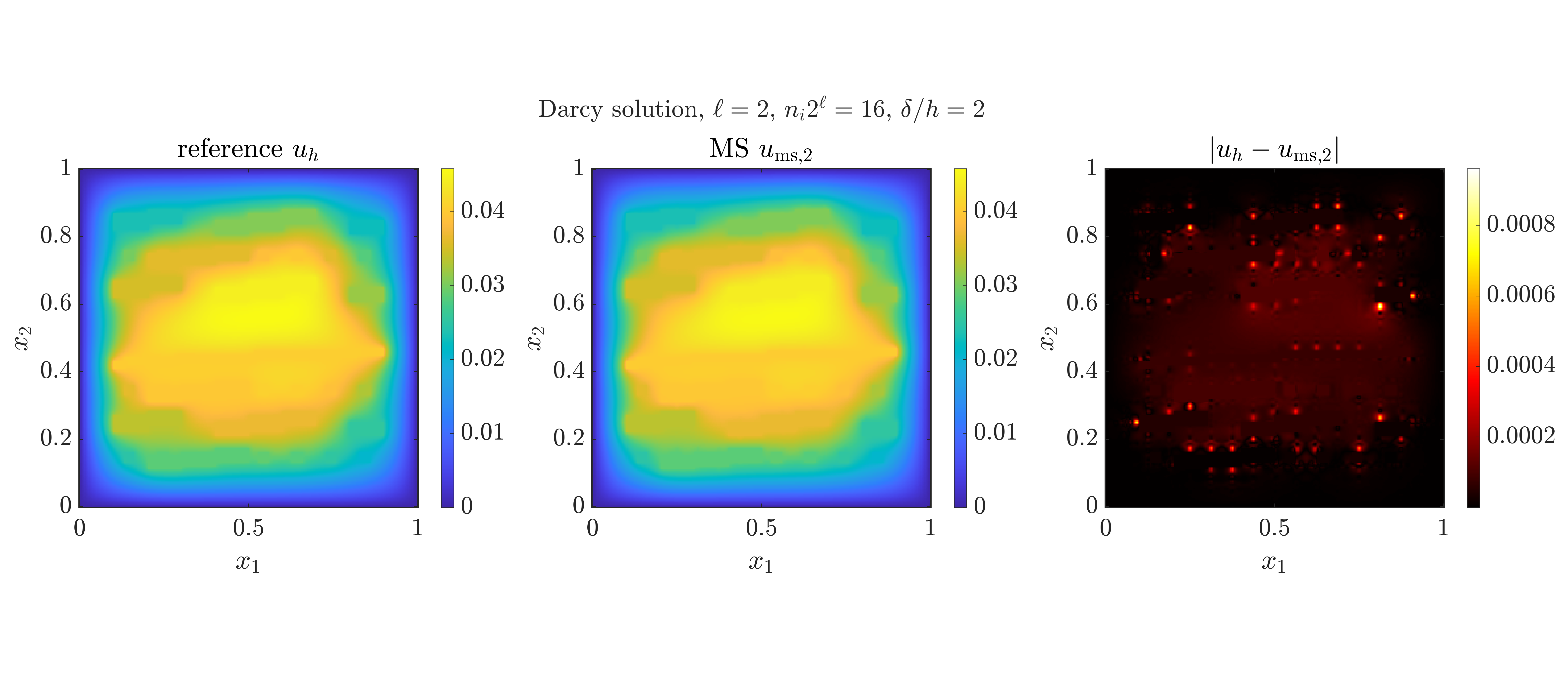}
\caption{Darcy reference solution, representative multiscale solution, and pointwise absolute difference for $\ell=2$ ($n_i 2^{\ell}=16$) and $\delta/h=2$.}
\label{fig:darcy-snapshot-combined}
\end{figure}
Figure~\ref{fig:darcy-errors-combined} plots $\log_{10} e_{L^2}$ and $\log_{10} e_{H^1}$ versus the common overlap width $\delta/h$. The curves show the combined influence of the trace-space level and the overlap width on the accuracy of the multiscale approximation.

\begin{figure}[htbp]
\centering
\includegraphics[width=0.95\linewidth]{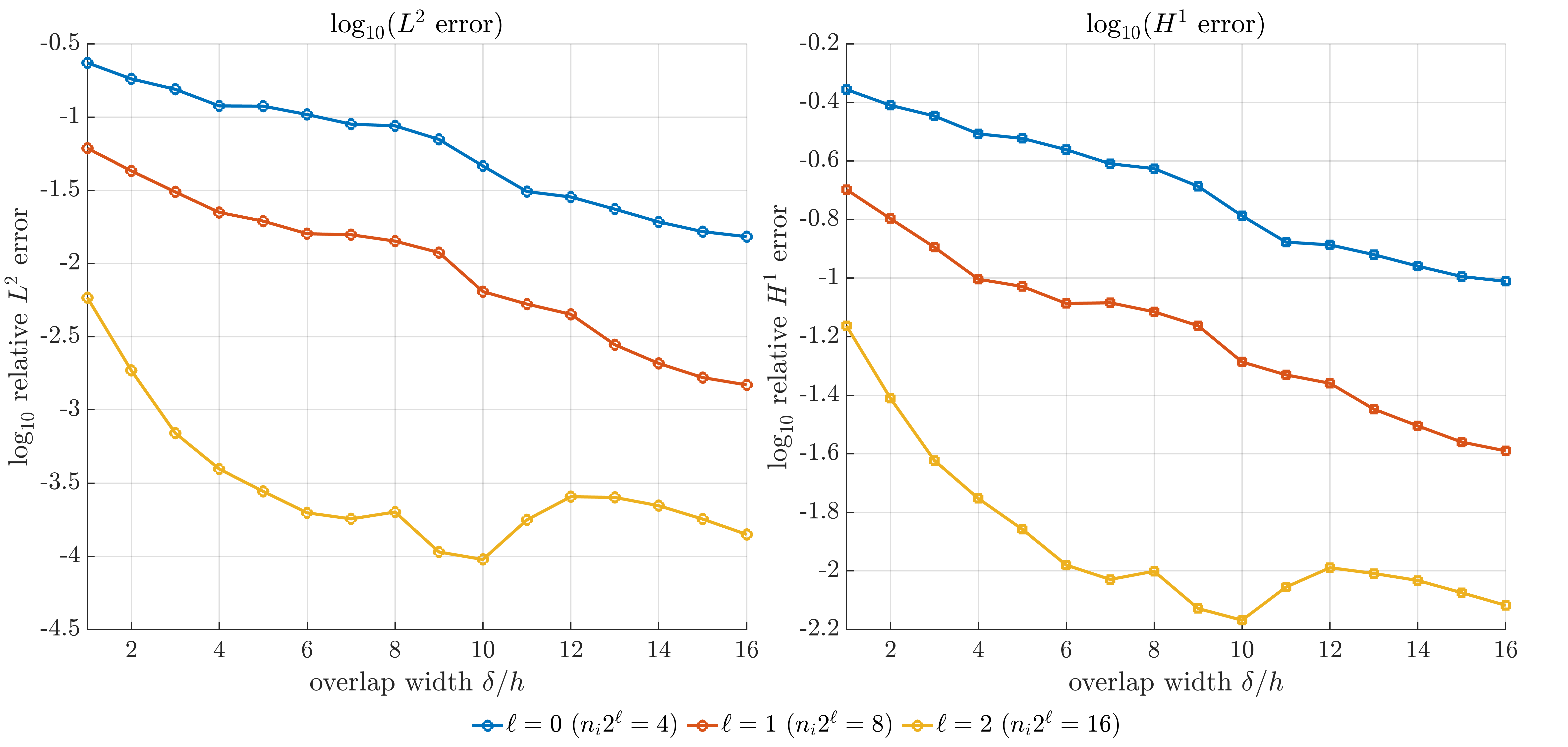}
\caption{Darcy relative errors versus the overlap width $\delta/h$ in logarithmic scale.}
\label{fig:darcy-errors-combined}
\end{figure}

\subsection{Convection-diffusion problem}

The second test is a convection-diffusion problem with homogeneous Dirichlet boundary condition,
\[
-\nabla\cdot(\varepsilon\nabla u)+b\cdot\nabla u = 1
\quad \text{in } \Omega=(0,1)^2,
\qquad
u=0 \quad \text{on } \partial\Omega .
\]
The fine grid contains $512\times512$ elements and the coarse grid contains $16\times16$ blocks; hence $h=1/512$, $H=1/16$, and each coarse block contains $32\times32$ fine elements. The diffusion coefficient is $\varepsilon=1.000\times10^{-2}$. The velocity field is
\[
b_1(x,y)=\alpha\sin(a_0\pi x)\cos(a_0\pi y),
\qquad
b_2(x,y)=-\alpha\cos(a_0\pi x)\sin(a_0\pi y),
\]
with $\alpha=2$ and $a_0=24$. The stabilization parameter is $\tau=h^2/(12\varepsilon)=3.179\times10^{-5}$. With the convention $\mathrm{Pe}_h=\|b\|_{L^\infty}h/(2\varepsilon)$, the maximum cell P\'{e}clet number is $1.952\times10^{-1}$, so the fine grid resolves the diffusion scale for this test.

Figure~\ref{fig:cv-snapshot-combined} shows the fine-grid solution, the representative multiscale solution with $\ell=2$ (so $n_i 2^{\ell}=16$) and $\delta/h=2$, and the corresponding pointwise absolute difference. The comparison visualizes the effect of the reduced wavelet space before aggregating the discrepancy into the global relative error norms.

\begin{figure}[htbp]
\centering
\includegraphics[width=0.98\linewidth]{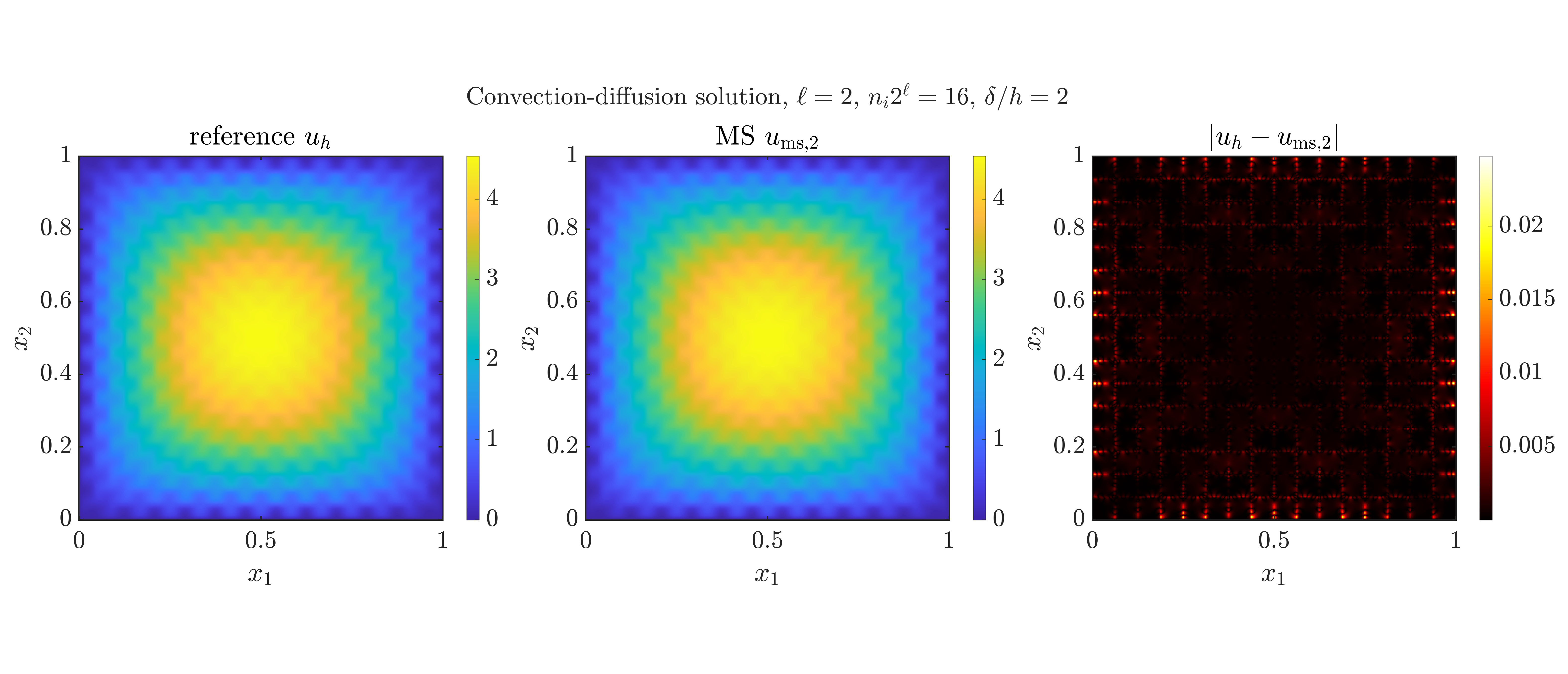}
\caption{Convection-diffusion reference solution, representative multiscale solution, and pointwise absolute difference for $\ell=2$ ($n_i 2^{\ell}=16$) and $\delta/h=2$.}
\label{fig:cv-snapshot-combined}
\end{figure}

Since this operator is nonsymmetric, the reported $H^1$ error is measured with the diffusion stiffness matrix, while the $L^2$ error is measured with the standard mass matrix. Figure~\ref{fig:cv-errors-combined} shows the logarithmic error history for the three levels as the common overlap width is increased.

\begin{figure}[htbp]
\centering
\includegraphics[width=0.95\linewidth]{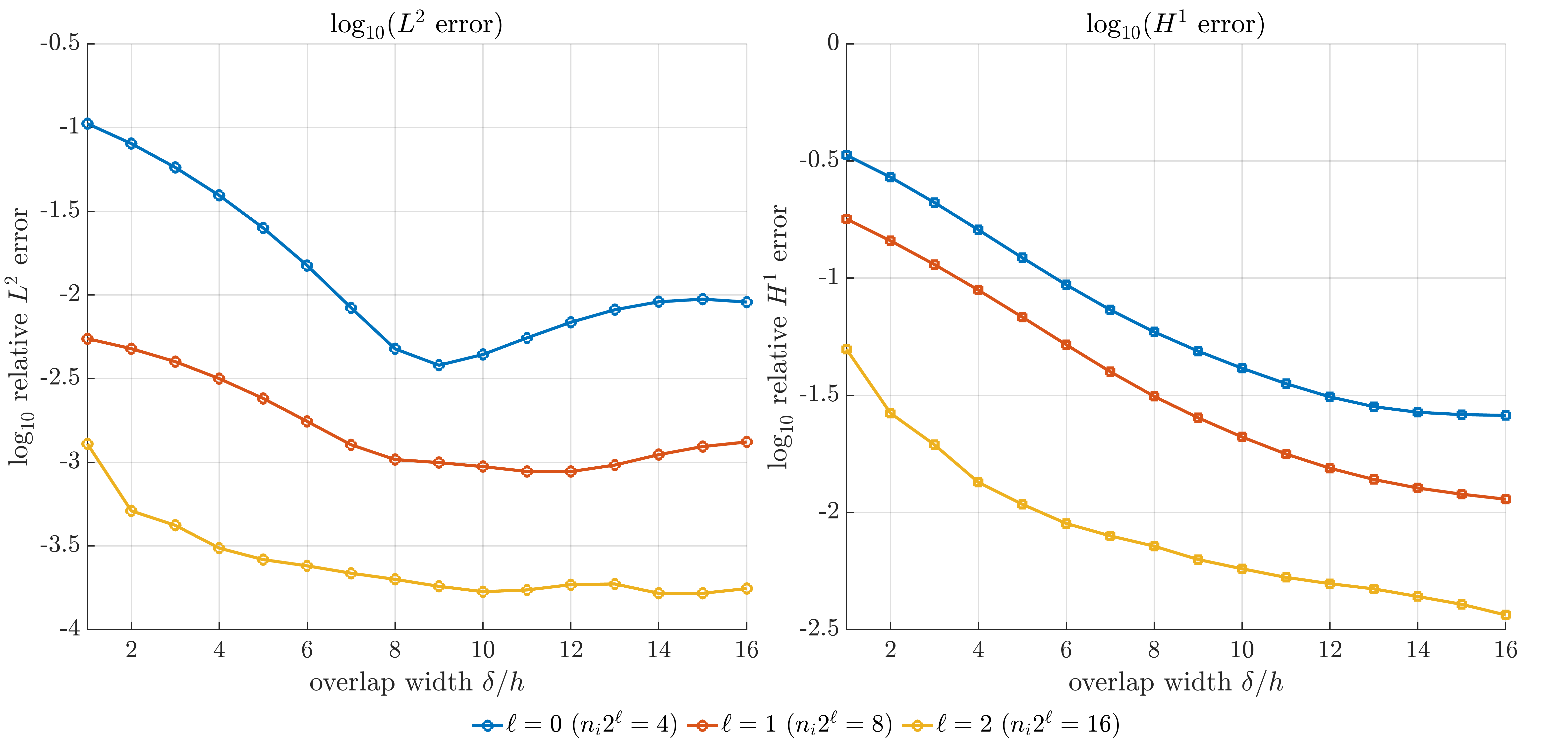}
\caption{Convection-diffusion relative errors versus the overlap width $\delta/h$ in logarithmic scale.}
\label{fig:cv-errors-combined}
\end{figure}

\FloatBarrier
\subsection{Helmholtz problem with first-order absorbing boundary condition}

The third test is the constant-coefficient Helmholtz problem with a localized source and a first-order absorbing boundary condition,
\[
-\Delta u-k^2u=f \quad \text{in } \Omega=(0,1)^2,
\qquad
\partial_n u-\mathrm{i}ku=0 \quad \text{on } \partial\Omega .
\]
The fine grid contains $640\times640$ elements and the coarse grid contains $40\times40$ blocks; hence $h=1/640$, $H=1/40$, and each coarse block contains $16\times16$ fine elements. The computation uses 20 fine-grid points per wavelength, so the wavelength is $20h=1/32$ and the wave number is $k=64\pi$. The source is a Gaussian centered at $(1/2,1/2)$ with support radius $h$.

Onlylevel $\ell=2$ is used for this indefinite wave problem; since $n_i=4$, this gives $n_i 2^{\ell}=16$ local boundary basis functions and the same number of discrete harmonic extensions. This choice keeps the local trace space sufficiently rich and isolates the effect of the overlap width $\delta/h$. The reported $L^2$ error is measured with the standard mass matrix, while the reported $H^1$ error is measured with $H^1(\Omega)$-seminorm, rather than with the indefinite Helmholtz quadratic form.

Figure~\ref{fig:hel-snapshot-combined} compares the real parts of the fine-grid and multiscale Helmholtz fields for the representative choice $\ell=2$ and $\delta/h=2$. The absolute-difference panel is computed from the complex-valued fields, namely $|u_h-u_{\mathrm{ms},2}|$.

\begin{figure}[htbp]
\centering
\includegraphics[width=0.98\linewidth]{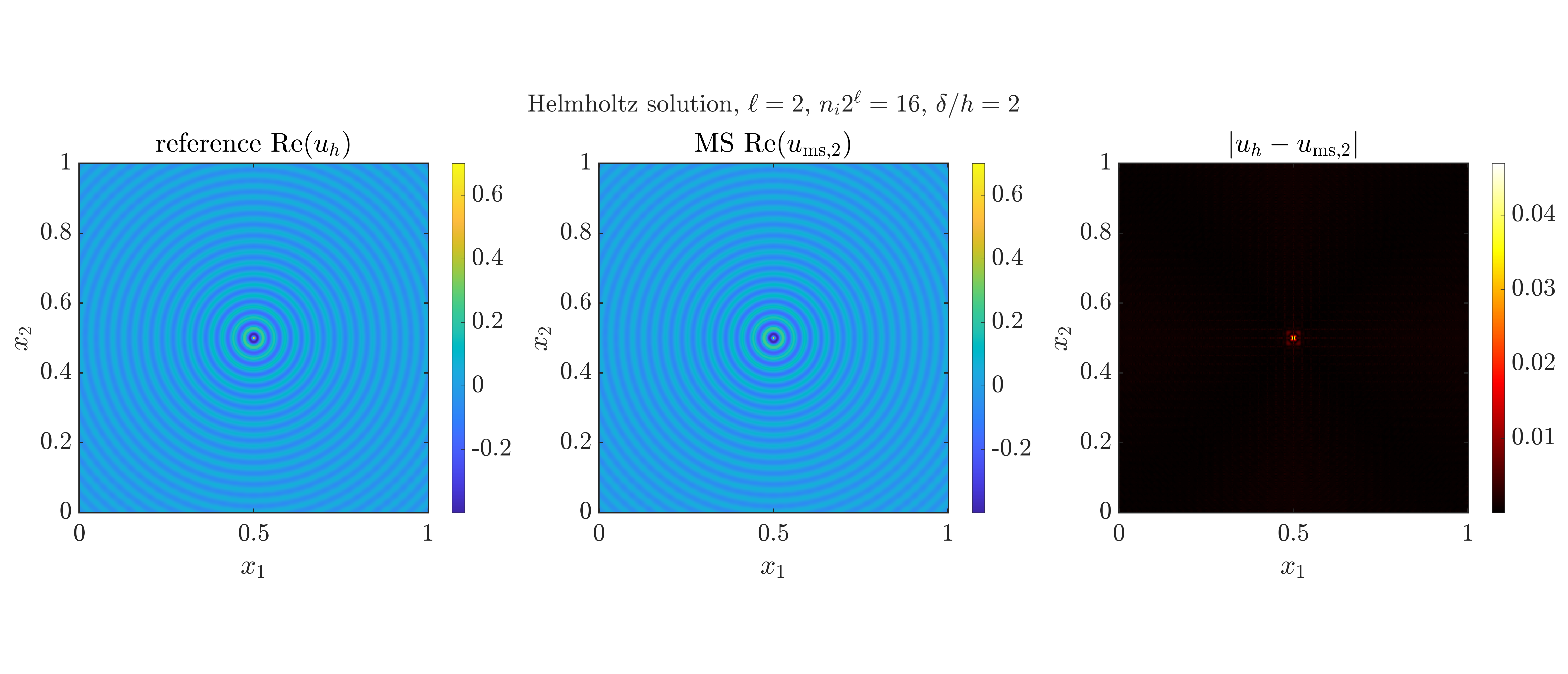}
\caption{Helmholtz reference solution, representative wavelet multiscale solution, and pointwise absolute difference for $\ell=2$ ($n_i 2^{\ell}=16$) and $\delta/h=2$. The first two panels show real parts, while the difference is the complex modulus.}
\label{fig:hel-snapshot-combined}
\end{figure}

Figure~\ref{fig:hel-errors-combined} displays the two relative error curves in the same logarithmic axes. The figure should be interpreted as the error of the multiscale approximation relative to the fine-grid Helmholtz solution; it does not remove the dispersion error already present in the fine-grid reference solution.

\begin{figure}[htbp]
\centering
\includegraphics[width=0.95\linewidth]{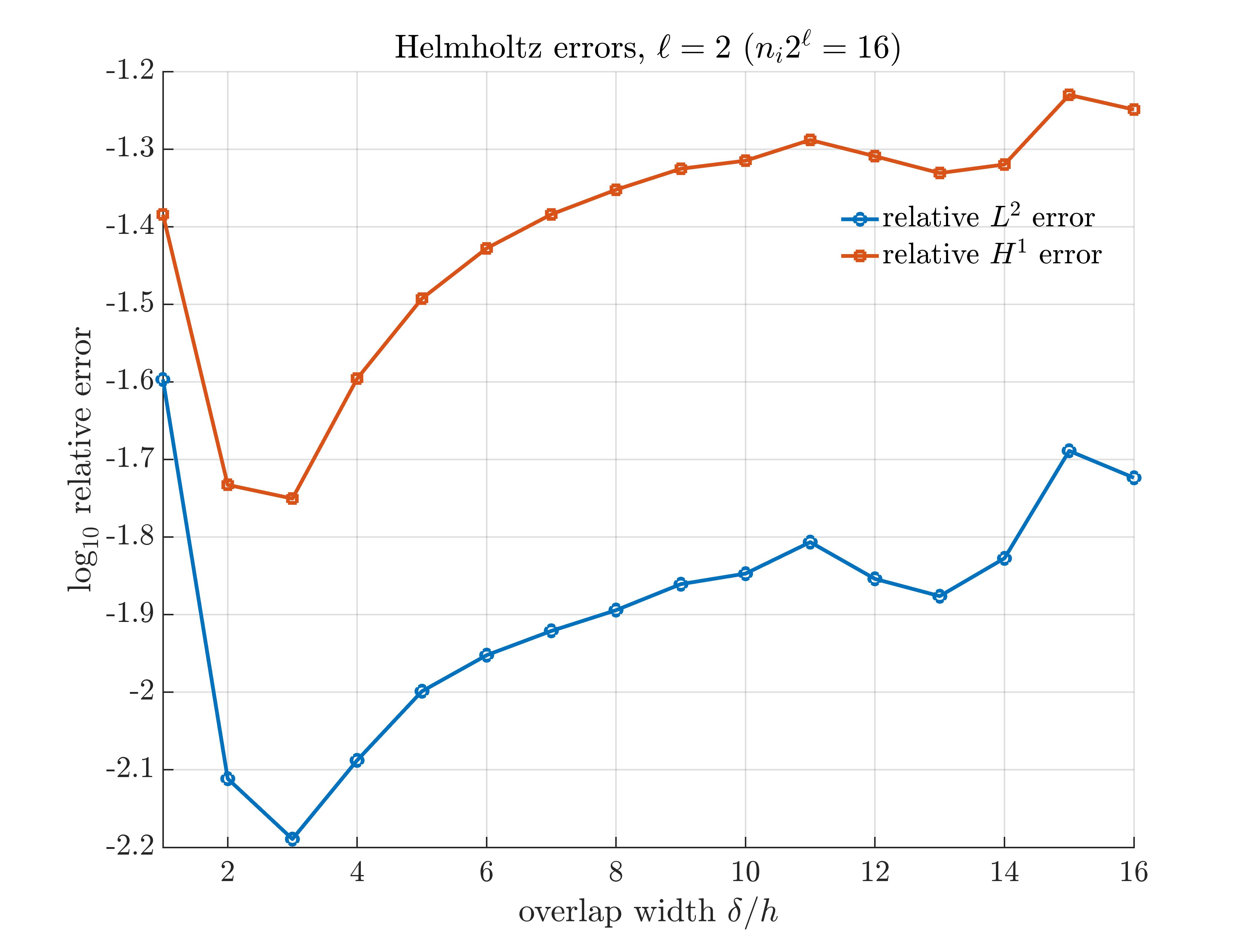}
\caption{Helmholtz relative $L^2$ and $H^1$ errors versus the overlap width $\delta/h$ in logarithmic scale for $\ell=2$ ($n_i 2^{\ell}=16$).}
\label{fig:hel-errors-combined}
\end{figure}
\section{Conclusion and future work}\label{sec:conclusion}
In this paper, we present the Edge Multiscale Finite Element Methods (EMsFEM) in the discrete setting and outline the main ideas behind their error estimates. Under the current methodology, which employs partition of unity functions, the error estimates depend on two key components:
\begin{enumerate}
    \item An $L^2$ \textit{a priori} estimate for discrete generalized harmonic functions in each subdomain.
    \item A Caccioppoli-type estimate for discrete generalized harmonic functions in each subdomain.
\end{enumerate}
We present several numerical tests, including Darcy problems with highly oscillatory coefficients, convection-dominated diffusion equations, and Helmholtz problems with large wavenumbers, to illustrate the performance of the method. Additionally, several questions remain open for future investigation:
\begin{itemize}
    \item[1.] EMsFEMs are constructed using local multiscale basis functions defined by Dirichlet boundary conditions. Could other boundary conditions—such as Neumann or, more generally, Robin-type conditions—be used instead, particularly in cases where Dirichlet conditions lead to ill-posedness?
    \item[2.] Is it possible to remove the partition of unity functions in the construction of multiscale basis functions? While this may compromise the conformity of the numerical scheme, it could help preserve desirable properties—such as divergence in Maxwell problems—and potentially improve the approximation properties of the multiscale basis functions by eliminating the overlapping parameter in the analysis.
\end{itemize}
\bibliographystyle{abbrv}
\bibliography{reference}
\end{document}